\def\mathclap#1{\text{\hbox to 0.3pt{\hss$\mathsurround=0pt#1$\hss}}}
\newtheorem{theorem}{Theorem}
\newtheorem{lemma}[theorem]{Lemma}
\newtheorem{proposition}[theorem]{Proposition}
\begin{document}

\begin{abstract}
We prove $L^{p}$-boundedness of oscillating multipliers on symmetric spaces
of noncompact type of arbitrary rank, as well as on a wide class of locally symmetric spaces.
\end{abstract}

\title[Oscillating multipliers ]{Oscillating multipliers on symmetric and
locally symmetric spaces}
\thanks{Supported by the Hellenic Foundation for Research and Innovation, Project HFRI-FM17-1733.}

\dedicatory{To the memory of Professor Michel Marias.}
\author{Effie Papageorgiou}
\email{papageoeffie@gmail.com}
\address{Department of Mathematics, Aristotle University of Thessaloniki,
Thessaloniki 54124, Greece.}
\curraddr{Department of Mathematics and Applied Mathematics, University of Crete, Voutes Campus, 70013 Heraklion, Crete, Greece.}
\subjclass[2000]{Primary: 42B15, 43A85, 22E40. Secondary: 22E30, 42B20}
\keywords{Oscillating multipliers, Symmetric spaces and locally symmetric
spaces, Kunze and Stein phenomenon}
\maketitle

\section{Introduction and statement of the results}

The main objective in this article is to study the $L^p$ boundedness of oscillating multipliers on symmetric spaces of arbitrary rank and locally symmetric spaces. Our aim is to find the corresponding analogues of the classical Euclidean assumptions on the above mentioned geometries and to generalize the results obtained in the rank  one case by Giulini and Meda in \cite{GIUME}. The ingredients we shall use were already known, however  their present use allows us to overcome the rank obstacle in a uniform manner.
 
 {
To put the result in perspective, let us discuss the background. On $\mathbb{R}^{n}$, consider the function
	\begin{equation*}
	\widetilde{m}_{\alpha ,\beta }(\xi )=\left\Vert \xi \right\Vert ^{-\beta }e^{i\left\Vert
		\xi \right\Vert ^{\alpha }}\theta \left( \|\xi\| \right) ,\quad \alpha ,\beta >0,
	\end{equation*}%
	where $\theta$ is a smooth function, vanishing near zero and equal to $1$ outside the unit ball. As usual, denote by $C_0^{\infty}(\mathbb{R}^n)$ the set of smooth, compactly supported functions on $\mathbb{R}^n$. Let $\widetilde{T}_{\alpha ,\beta }$ be the operator which in the
	Fourier transform variables is given by
	\begin{equation}\label{multR}
	\widehat{(\widetilde{T}_{\alpha ,\beta }f)}\left( \xi \right) =\widetilde{m}_{\alpha ,\beta }(\xi )%
	\widehat{f}\left( \xi \right) \text{, \ }f\in C_{0}^{\infty }\left( \mathbb{R}%
	^{n}\right).
	\end{equation}
	In other words, $\widetilde{T}_{\alpha ,\beta }$ is a convolution operator with kernel the inverse
	Fourier transform of $\widetilde{m}_{\alpha ,\beta }$.
	This family  provides examples of operators that do not fall under the scope of Calder{\'o}n-Zygmund theory, but rather  are given by ``strongly singular kernels", \cite{FE2}. They are also interesting because of their intimate connection with the Cauchy problem for the  wave and the Schr{\"o}dinger
	equation, for $\alpha=1$ and $\alpha=2$, respectively.} In the euclidean setting, these operators have been
extensively studied, see for example \cite{FE, FE2, HIR, SCHO, STE, WAI}. The $L^{p}$-boundedness of oscillating multipliers has been studied also in
various geometric settings as Riemannian manifolds, Lie groups and symmetric
spaces, see for instance \cite{ALEX, GEORG, GIUME,MAR} and the references
therein. In particular, for the rank one case of symmetric spaces (which include hyperbolic space) and locally symmetric spaces see \cite{CGM2, GIUME, ION2} and \cite{EFFI}, respectively.

 {In the present paper we deal with oscillating multipliers in the setting of noncompact symmetric spaces of arbitrary rank. These are Riemannian, non positively curved manifolds, with a structure that induces a Fourier-like analysis.} In more detail, let $G$ be a semi-simple, noncompact, connected Lie group with finite
center and take $K$ be a maximal compact subgroup of $G$. We consider the
symmetric space of noncompact type $X=G/K$, with  {$\operatorname{dim}X=n$.} Denote by $\mathfrak{g}$ and $%
\mathfrak{k}$ the Lie algebras of $G$ and $K$, respectively. We have the
Cartan decomposition $\mathfrak{g}=\mathfrak{p} \oplus \mathfrak{k}$. Let $%
\mathfrak{a}$ be a maximal abelian subspace of $\mathfrak{p}$ and denote its dual by $\mathfrak{%
a}^*$. If $\operatorname{dim}\mathfrak{a}=d$, then we say that $X$ has rank $d$.  { The Killing form of $\mathfrak{g}$ induces a scalar product on $\mathfrak{a}$, hence on $\mathfrak{a}^*$. The norm induced by the corresponding product  on $\mathfrak{a}^*$ will be denoted by  $\|\cdot\|$.}

Let $X$ be a symmetric space of noncompact type. Consider the function
\begin{equation}
\ m_{\alpha ,\beta }(\lambda )=(\Vert \lambda \Vert ^{2}+\Vert \rho \Vert
^{2})^{-\beta /2}e^{i(\Vert \lambda \Vert ^{2}+\Vert \rho \Vert
^{2})^{\alpha /2}},\;\alpha, \beta > 0,\;\lambda \in \mathfrak{a^{\ast }},
\label{mult}
\end{equation}%
 {where $\rho$ is the half sum of positive roots counted with their multiplicity. This multiplier is the analogue of (\ref{multR}) in the present setting, but since it remains bounded for all $\lambda\in \mathfrak{a}^{\ast }$, as in \cite{GIUME}, the cut-off function in (\ref{multR}) is no longer necessary.} Denote by $\kappa_{\alpha ,\beta }$ the inverse spherical
Fourier transform of $m_{\alpha ,\beta }$ in the sense of distributions. Consider the convolution operator $T_{\alpha ,\beta }$, where
\begin{equation}
T_{\alpha ,\beta }(f)(x)= {(f\ast\kappa_{\alpha ,\beta }) (x)=}\int_{G}\kappa _{\alpha ,\beta
}(y^{-1}x)f(y)dy,\quad f\in C_{0}^{\infty }(X).  \label{operatorX1}
\end{equation}

Let $\Gamma$ be a discrete and torsion free subgroup of $G$ and let us consider the locally symmetric space $M=\Gamma \backslash X$, which equipped with the projection of the
canonical Riemannian structure of $X$, becomes a Riemannian manifold.

To define oscillating multipliers on $M$, we first observe that if $f\in
C_{0}^{\infty }(M)$, then the function $T_{\alpha ,\beta }f$ defined by (%
\ref{operatorX1}) is right $K$-invariant and left $\Gamma $-invariant. So, $%
T_{\alpha ,\beta }$ can be considered as an operator acting on functions on $%
M$, which we shall denote by $\widehat{T}_{\alpha ,\beta}$.

Let $\kappa $ be a $K$-bi-invariant function and denote by $\ast |\kappa |$ the
convolution operator whose kernel is $|\kappa |$. Let $p\in (1,\infty )$, denote by $p^{\prime }$ its conjugate and set 
\begin{equation*}
s(p)=2\min \left\{ \frac{1}{p},\frac{1}{p^{\prime }}\right\} .
\end{equation*}
We shall assume that the following version of the Kunze and Stein phenomenon holds,
\begin{equation}\label{kunzeM}
\Vert \ast |\kappa |\Vert _{L^{p}(M)\rightarrow L^{p}(M)}\leq c\underset{G}{%
	\int }|\kappa (g)|\varphi _{-i\eta _{\Gamma }}(g)^{s(p)}{dg},
\end{equation}%
where $\varphi _{\lambda }$ are the elementary spherical functions, $\eta
_{\Gamma }$ is a vector of the euclidean sphere $S(0,(\left\Vert\rho
\right\Vert^{2}-\lambda _{0})^{1/2})$ of $\mathfrak{a}^{\ast }$ and $\lambda
_{0}$ is the bottom of the spectrum of the Laplacian $\Delta _{M}$. 
For example, this is the case for $M=\Gamma \backslash G/K$, when
(i) $\Gamma $ \textit{is a lattice}, or (ii) $G$ \textit{possesses Kazhdan's property (T)} or (iii) $\Gamma \backslash G$ \textit{is non-amenable},
see \cite{LOMAjga} for more details. We say that $M$ belongs in the class (KS) if (\ref{kunzeM}), is valid on it. Note that $X$ belongs in (KS), \cite{HE, LOMAjga}.

Our main result is the following theorem. 
\begin{theorem}\label{th1} Assume that $\alpha \in (0,1)$ and that $M=\Gamma \backslash X$
belongs in the class (KS).

(i) If $\beta >n\alpha /2$, then ${T}_{\alpha ,\beta }$ (resp. $\widehat{T}
_{\alpha ,\beta }$) is bounded on $L^{p}(X)$ (resp. on $L^{p}(M)$) for all $%
p\in (1,\infty )$.

(ii) If $\beta \leq n\alpha /2$, then ${T}_{\alpha ,\beta }$ (resp. $\widehat{
T}_{\alpha ,\beta }$) is bounded on $L^{p}(X)$ (resp. on $L^{p}(M)$) for all
$p\in (1,\infty )$, provided that $\beta >\alpha n|1/p-1/2|.$

\end{theorem}

The above theorem was proved in \cite{GIUME, EFFI} for the rank one case,   {taking $\beta\in \mathbb{C}$ with $\text{Re}\beta>0$, but we will consider $\beta>0$ for simplicity}. Our proof treats symmetric and locally symmetric spaces of arbitrary rank in a uniform way. 

 {Let us make a few remarks about the symmetric space case. First, the operator $T_{\alpha, \beta}$ is bounded on $L^2(X)$ since the multiplier is bounded for all $\alpha, \beta>0$. But for $p\neq 2$, there is a  certain necessary condition (see \cite[Theorem 1]{CLERC} or \cite[p.604]{AN}), first observed by Clerc and Stein, which has no Euclidean analogue: every multiplier that yields an $L^p(X)$ bounded operator, for some $p\in (1, \infty)$, $p\neq 2$, extends to an invariant by the Weyl group, bounded, holomorphic function inside the tube $\mathcal{T}^p=\mathfrak{a^{\ast }}+i|2/p-1|C_{\rho }$. Here, $C_{\rho }$ denotes the convex hull of the images of $\rho$ under the Weyl group. In the rank one case, the interior of the tube reduces to the strip $\{\lambda \in \mathbb{C}: |\text{Im}\lambda|<|2/p-1|\rho\}$. Moreover, when $p = 1$, the multiplier should even extend to a bounded continuous function on the closed tube $\mathcal{T}^1$. Denote by $\langle \cdot, \cdot \rangle$ the $\mathbb{C}$-bilinear extension of the inner product of $\mathfrak{a}^{\ast }$ to $\mathfrak{a}_{\mathbb{C}}^{\ast }$ and observe that at $\lambda= i\rho$, the quantity $\langle \lambda, \lambda \rangle+\|\rho\|^2$ vanishes. Thus,  $m_{\alpha, \beta}$ is not defined for $\lambda= i\rho$, for any $\beta>0$ and the $L^1(X)$ problem is ill-posed.} 

 {The critical index concerning the size of $\beta$ appearing in Theorem \ref{th1} is the same as in the euclidean case. This is due to the following observation: the part of the operator ``at infinity", which is related to the large-frequencies' part of the kernel, is bounded on all $L^p(X)$, $p\in (1, \infty)$, without any restrictions on the size of the parameter $\beta$; on the other hand, the remaining ``local part" is essentially euclidean, thus inducing the condition between parameters $\alpha$, $\beta$ on Theorem \ref{th1}.  However, the result  for $\alpha=1$, cannot be obtained as a limit case of $\alpha \in (0,1)$,  $\alpha\rightarrow 1^{-}$. Indeed, for $\alpha=1$ the critical index for $\beta$ is $(n-1)\left| 1/p-1/2\right|$, rather than $n \left|  1/p-1/2\right|$}, so a different approach is required, see for instance \cite{CGM2}.

 The case $\alpha >1$ differs considerably from the corresponding Euclidean result or, for instance, the case of Riemannian manifolds of nonnegative Ricci curvature \cite[Theorem 1]{ALEX}. In fact, for $\alpha>1$, the operator is bounded only on $L^2(X)$. This is once again due to the necessary condition of Clerc and Stein. Indeed,  writing the complex number $\langle \lambda, \lambda \rangle+\|\rho\|^2$ in polar form, it is easy to see that  the multiplier $m_{\alpha, \beta}(\lambda)$ is not bounded in any tube domain $\mathcal{T}^p$, $p\neq 2$ (see also \cite[p.97]{GIUME}).
	
As usual, we perform a splitting of the kernel $\kappa _{\alpha ,\beta }$:
\begin{equation} \label{decomp}
\kappa _{\alpha ,\beta }=\zeta \kappa _{\alpha ,\beta }+(1-\zeta )\kappa
_{\alpha ,\beta }:=\kappa _{\alpha ,\beta }^{0}+\kappa _{\alpha ,\beta
}^{\infty }, 
\end{equation}%
where $\zeta \in C^{\infty }(K\backslash G/K)$ is a cut-off function such
that
\begin{equation}
\zeta (x)=%
\begin{cases}
1, & \text{if }|x|\leq 1/2, \\
0, & \text{if }|x|\geq 1.%
\end{cases}
\label{zita}
\end{equation}%
Denote by ${T}_{\alpha ,\beta }^{0}$ (resp. ${T}_{\alpha ,\beta }^{\infty }$%
) the convolution operators on $X$ with kernel $\kappa _{\alpha ,\beta }^{0}$
(resp. $\kappa _{\alpha ,\beta }^{\infty })$. Let $\widehat{T}%
_{\alpha ,\beta }^{0}$ and $\widehat{T}_{\alpha ,\beta }^{\infty }$ be the
corresponding convolution operators on $M$.  To prove the $L^{p}$ boundedness of the local part $T_{\alpha ,\beta }^{0}$
 on $X$, we follow the spectral multiplier approach of
\cite{ALEX2} (see also \cite{GEORG, MAR2}) and then use the spherical Fourier transform  in order to adapt these ideas to the symmetric space setting. Then, the result on $M$ for $\widehat{T}_{\alpha ,\beta }^{0}$ will follow. To prove the $L^{p}$ boundedness of the  parts at infinity $T_{\alpha ,\beta }^{\infty}$, $\widehat{T}_{\alpha ,\beta }^{\infty }$,  we shall make use as in \cite%
{AN, LOMAjga, EFFI} of Kunze and Stein phenomenon. We modify the proof of the main multiplier theorem in \cite{AN, LOMAjga} in order to exploit the decay rate of the derivatives of $m_{\alpha, \beta}$.

The paper is organized as follows. In Section 2 we present the necessary
tools we need for our proofs. In Section 3 we study  the $L^{p}$ boundedness
of the part of the operator near the origin. In Section 4 we treat the part
at infinity and we finish the proof of Theorem 1.

Throughout this article the different constants will always be denoted by the same letter $c$.
\section{Preliminaries}

In this section we recall some basic facts about symmetric and locally
symmetric spaces, which we will use for the proof of our results. For details see
\cite{AN, HEL2,  HEL, LOMAjga}.

Let $G$ be a semisimple Lie group, connected, noncompact, with finite center
and let $K$ be a maximal compact subgroup of $G$. We denote by $X$ the
noncompact symmetric space $G/K$. The group $G$ acts naturally on $X$ by left translations.  Denote by $\mathfrak{g}$ and $\mathfrak{k}$ the Lie algebras of $G$
and $K $ respectively. If $X$, $Y$ are two elements of $\mathfrak{g}$, then $\text{ad}(X)(Y)=[X,Y]$ is a linear transformation of $\mathfrak{g}$ to itself.  Thus, we may define the Killing form by $B(X,Y)=\text{tr}(\text{ad}X\text{ad}Y)$, which is symmetric and bilinear. Let also $\mathfrak{p}$ be the subspace of $\mathfrak{g}$ which is
orthogonal to $\mathfrak{k}$ with respect to the Killing form. 
We identify  $\mathfrak{p}$ with the tangent space at the origin $o=K$ on $X$. 

Fix $%
\mathfrak{a} $ a maximal abelian subspace of $\mathfrak{p}$ and denote by $%
\mathfrak{a}^{\ast }$ the real dual of $\mathfrak{a}$. 
The Killing form on $\mathfrak{g}$ restricts to a positive definite form on $\mathfrak{a}$. This in turn induces a positive inner product, hence a norm $\|\cdot\|$ on $\mathfrak{a}$, and by duality, on $\mathfrak{a^{\ast}}$ as well (we will use the same notation for the norms). If $\operatorname{dim}%
\mathfrak{a}=d$, we say that $X$ has rank $d$.

The Killing form endows $X$ with both a natural Riemannian metric and a corresponding $G$-invariant measure (denoted $dx$). Therefore, we can define the Laplace-Beltrami operator $\Delta _{X}$ on $X$.  If $\Gamma $ is a discrete, torsion free subgroup of $G$, then the locally
symmetric space $M=\Gamma \backslash X$, equipped with the projection of the
canonical Riemannian structure of $X$, becomes a Riemannian manifold. In the sequel we assume that $\operatorname{dim}%
X=n $.

 We say that $\alpha \in
\mathfrak{a}^{\ast }\backslash\{0\}$ is a root vector, if the space
\begin{equation*}
\mathfrak{g}^{\alpha }=\left\{ X\in \mathfrak{g}:[H,X]=\alpha (H)X,\text{
for all }H\in \mathfrak{a}\right\} 
\end{equation*}
is non-trivial. We shall denote by  $m_{\alpha}=\text{dim}\mathfrak{g}^{\alpha }$ the multiplicity of the root $\alpha$ and by $\Sigma\subset \mathfrak{a^{\ast}}$ the root system associated to $(\mathfrak{g}, \mathfrak{a})$, containing all roots. Let $W$ be the Weyl group associated to $\Sigma$, that is, the finite subgroup of isometries of $\Sigma$, generated by reflections orthogonal to the walls (the hyperplanes
orthogonal to the roots of $\Sigma$). The roots divide $\mathfrak{a}$ into Weyl chambers, maximal connected
regions where no root vanishes. 
Choose a Weyl chamber $\mathfrak{a}_{+}$, to be called
positive, and say that a root $\alpha$ is positive if $\alpha(H)$ is positive for $H$  in $\mathfrak{a}_{+}$. 
The set $\Sigma^+$ contains all positive roots and $\Sigma_0^+\subset \Sigma^+$ all  $\alpha$ that are indivisible, meaning that $\alpha/2$ is not a root. Denote by $\rho$ the half sum of positive roots counted with their multiplicities:
\[\
\rho=\frac{1}{2}\sum_{\alpha \in \Sigma^+}m_{\alpha}\alpha \in \mathfrak{a^{\ast }}.
	\]
	Thus the norm $\|\rho\|$ is defined, and the $L^2$ spectrum of the Laplace-Beltrami operator $\Delta_{X}$ consists of the half line $[\|\rho\|^2, \infty)$.

We have the Cartan decomposition on the group level by
\begin{equation}\label{kak}
G=K(\exp \overline{\mathfrak{a}_{+}})K, 
\end{equation}%
where $\overline{\mathfrak{a}_{+}}$ is the closure of the cone $\mathfrak{a}_{+}$. Let $H$ be the (unique, contrarily to the $K$ components) $\overline{\mathfrak{a}_{+}}$ component of $x \in G$ in the
decomposition (\ref{kak}) and define $|x|=\|H\|$. Viewed on $G/K$, $|x|$ is the distance of $xK$ to the origin $o=K$. Functions on $X$ are identified with the right $K$-invariant functions on $G$ and vice versa. Similarly, left $K$-invariant functions on $X$ can be viewed as $K$-bi-invariant functions on $G$.
Normalize the Haar measure $dk$ of $K$ such that $\int_{K}dk=1$. Then, from
the Cartan decomposition, it follows that
\begin{equation}\label{haar}
\int_{G}f(g)dg=\int_{K}dk_{1}\int_{{\mathfrak{a}_{+}}}{\delta (H)}%
dH\int_{K}f(k_{1}\exp (H)k_{2})dk_{2},
\end{equation}%
where the Jacobian density $\delta (H)$ satisfies
\begin{equation}\label{delta}
\delta(H)=\prod_{\alpha \in \Sigma^+}\sinh^{m_{\alpha}}\alpha(H)\asymp   \left\{ \prod_{\alpha \in \Sigma^+} \left(\frac{\alpha(H)}{1+\alpha(H)}\right)^{m_{\alpha}} \right\}e^{2\rho(H)}, 
\end{equation}
\cite[p.1038]{ANJ}, where $f(x)\asymp g(x)$ means that there exist finite positive constants $C_1\leq C_2$ such that $C_1g(x)\leq f(x)\leq C_2g(x)$. Note that if $f$ is $K$-bi invariant, then
\begin{equation}
\int_{G}f\left( g\right) dg=\int_{X}f\left( x\right) dx.  \label{rightinv}
\end{equation}

The role played by exponentials in euclidean Fourier analysis is played by the (elementary) spherical functions in the
Fourier analysis of $K$-bi-invariant functions on $G$. They are $K$-bi-invariant  and given by the integral representation
\[\varphi_{\lambda}(x)=\int_Ke^{ (i\lambda-\rho)H(xk)}dk.
\] We then have
\begin{equation}\label{sphericalphi}
|\varphi_{\lambda }(\exp H)|\leq \varphi_0(\exp H)\leq c(1+\|H\|)^ae^{-\rho(H)}, \; \lambda\in \mathfrak{a}^{\ast}, \;H\in \overline{\mathfrak{a}_{+}},
\end{equation}
for some constants $c, a>0$, \cite[p.1046]{ANJ}. Denote by $S(K\backslash G/K)$ the Schwartz space of $K$-bi-invariant functions on $G$. Then, the spherical Fourier transform $\mathcal{H}$ is defined
by
\begin{equation*}
\ \mathcal{H}f(\lambda )=\int_{G}f(x)\varphi _{-\lambda }(x)\;dx,\quad
\lambda \in \mathfrak{a^{\ast }},\quad f\in S(K\backslash G/K).
\end{equation*} Let $S(\mathfrak{a^{\ast }})$ be the usual Schwartz space on the euclidean space $\mathfrak{a^{\ast}}$ and $S(\mathfrak{a^{\ast }})^{W}$ the
subspace of Weyl-invariant functions in $S(\mathfrak{a^{\ast }})$ (e.g. radial: $m(\lambda)=m_0(\|\lambda\|)$). Then, by a
celebrated theorem of Harish-Chandra, $\mathcal{H}$ is an isomorphism
between $S(K\backslash G/K)$ and $S(\mathfrak{a^{\ast }})^W$ and its inverse
is given by
\begin{equation}\label{inversion}
\ (\mathcal{H}^{-1}f)(x)=c\int_{\mathfrak{a\ast }}f(\lambda )\varphi
_{\lambda }(x)\frac{d\lambda }{|\mathbf{c}(\lambda )|^{2}},\quad x\in
G,\quad f\in S(\mathfrak{a^{\ast }})^{W},
\end{equation}
where $\mathbf{c}(\lambda )$ is the Harish-Chandra function. It is explicitly known, but we shall only need the following rough
estimate:
\begin{equation}\label{harish}
|\mathbf{c}(\lambda )|^{-2}\leq c(1+\|\lambda\|^2)^{b/2}
\end{equation}
for some constants $c, b > 0$, \cite[p.601]{AN}.

Set
\begin{equation*}
m_{t}(\lambda )=e^{-t(\Vert \lambda \Vert ^{2}+\Vert \rho \Vert
^{2})},\quad t>0,\;\lambda \in \mathfrak{a^{\ast }}.
\end{equation*} Then the heat kernel $%
p_{t}(x)$ on $X$ is given by $(\mathcal{H}^{-1}m_{t})(x)$ \cite{ANJ}.
The heat kernel on symmetric spaces has been extensively studied, see for example \cite%
{ANJ,ANO}. Sharp estimates of the heat kernel have been obtained by Davies
and Mandouvalos in \cite{DAVMAN} for the case of real hyperbolic space,
while Anker and Ji \cite{ANJ} and later Anker and Ostellari \cite{ANO},
generalized the results of \cite{DAVMAN} to all symmetric spaces of
noncompact type.  {Recall also a few fundamental properties of the heat kernel, \cite{ANO}: it is a bi-$K$-invariant function on $G$, thus determined by its restriction to the positive Weyl chamber. Moreover it is symmetric and positive: $p_t(x, y) = p_t(y, x)> 0$, for every $x, y\in X$,  where 
\begin{equation}\label{pt(xy)}	p_t(x, y)=p_t(gK, hK)=p_t(h^{-1}g), \quad g, h\in G.
\end{equation} The heat operator $e^{t\Delta}$ is given by
\begin{equation}\label{heatop}e^{t\Delta}f(x)=\int_Xp_t(x,y)f(y)dy, \; f\in C_0^{\infty}(X),\;x\in X, \;t>0.
\end{equation} Finally, the semigroup property holds:
\begin{equation}\label{semi}\int_Xp_s(x,y)p_t(y,z)dy=p_{s+t}(x,z).
\end{equation}
}

Recall that $\Sigma _{0}^{+}$ is the set of positive indivisible roots $\alpha $
and by $m_{\alpha }$ the dimension of the root space $\mathfrak{g}^{\alpha }$. In \cite[Main Theorem]{ANO} it is proved the following sharp estimate:
\begin{align}
p_{t}(\exp {H})&\asymp ct^{-n/2}\left( \underset{\alpha \in \Sigma _{0}^{+}}{%
\prod }(1+\alpha(H) )(1+t+\alpha(H) )^{\frac{%
m_{\alpha }+m_{2\alpha }}{2}-1}\right)  \notag \\
& \times e^{-\left\Vert \rho \right\Vert ^{2}t-\rho(H)
-\left\Vert H\right\Vert ^{2}/4t},\quad t>0, \;H\in \overline{\mathfrak{a}%
_{+}},  \label{heat}
\end{align}%
where $n=$dim$X$.

From (\ref{heat}), we deduce the following crude estimate
\begin{equation}
p_{t}(\exp {H})\leq ct^{-n/2}e^{-\left\Vert H\right\Vert ^{2}/4t}, \quad
t>0, \;H\in \overline{\mathfrak{a}_{+}},  \label{ostellari}
\end{equation}%
which is sufficient for our purposes.

\section{$L^p$ boundedness of the local part}

In this section we shall prove the following proposition.

\begin{proposition}
\label{T0X} Assume that $\alpha \in (0,1)$.

\begin{enumerate}
\item[(i)] If $\beta >n\alpha /2$, then ${T}_{\alpha ,\beta }^{0}$ (resp. $%
	\widehat{T}_{\alpha ,\beta }^{0}$) is bounded on $L^{p}(X)$ (resp. on $%
	L^{p}(M)$) for every $p\in [1,\infty ]$.
\item[(ii)] If $\beta \leq n\alpha /2$, then ${T}_{\alpha ,\beta }^{0}$
(resp. $\widehat{T}_{\alpha ,\beta }^{0}$) is bounded on $L^{p}(X)$ (resp.
on $L^{p}(M)$), $p\in (1,\infty )$, provided that $\beta >\alpha n\left\vert
1/p-1/2\right\vert $.

\end{enumerate}
\end{proposition}

To prove the $L^{p}$ boundedness of the local part $T_{\alpha ,\beta }^{0}$
of the operator $T_{\alpha ,\beta }$ on $X$ we shall follow the approach of
\cite{ALEX2} (see also \cite{GEORG, MAR2}), and express the kernel $\kappa
_{\alpha ,\beta }$ of the operator $T_{\alpha ,\beta }$ via the heat kernel $%
p_{t}$ of the symmetric space $X$. 

 {As  in \cite{GIUME}, we may write
\begin{equation}\label{Tandmu}
T_{\alpha, \beta}=\mu_{\alpha, \beta}(\Delta_{X}), \text{ where }\mu_{\alpha, \beta}(\xi)=\xi^{-\beta/2}e^{i\xi^{\alpha/2}}, \; \xi>0, 
\end{equation}
and observe that
\begin{equation}\label{mandmu}
m_{\alpha, \beta}(\lambda)=\mu_{\alpha, \beta}(\|\lambda\|^2+\|\rho\|^2), \quad \lambda \in \mathfrak{a}^{\ast}.
\end{equation}}

 {Consider the functions $\omega_0, \omega \in C_{0}^{\infty }(\mathbb{R}_+)$, such that  \[\operatorname{supp}\omega_0 \subset \left\{ \xi :0\leq \xi \leq {2}\right\}, \quad \operatorname{supp}\omega \subset \left\{ \xi :1/{2}\leq \xi \leq {2}\right\}\] and take \[\omega_j(\xi)=\omega(2^{-j}\xi), \;j\in \mathbb{N}, \text{ and } \sum\limits_{j\geq 0}\omega_j(\xi)=1.\]}
	
Then, as in \cite{ALEX}, for $j\geq 0$, we write
\begin{equation}\label{muj}
\mu_j(\xi)=\mu_{\alpha, \beta}(\xi)\omega_j(\xi), 
\end{equation}
where
\begin{equation}\label{suppmuj}
\operatorname{supp}\mu_0\subset\{\xi: 0< \xi\leq 2\} \text{ and } \operatorname{supp}\mu_j\subset\{\xi: 2^{j-1}\leq \xi\leq 2^{j+1}\}, \; j\in \mathbb{N}. 
\end{equation}

 {Define the operators $T_j=\mu_j(\Delta_X)$ and note that by (\ref{Tandmu}) and (\ref{muj}), we have $T_{\alpha, \beta}=\sum\limits_{j\geq 0}T_j.$ Using the group structure, we may also write
\begin{equation}\label{Tjdef}
T_j=\ast\kappa_j=\mathcal{H}^{-1}m_j,  
\end{equation}
where
\begin{align}\label{mj}
m_j(\lambda)&=\mu_j(\|\lambda\|^2+\|\rho\|^2)=\mu_{\alpha, \beta}(\|\lambda\|^2+\|\rho\|^2)\omega_j(\|\lambda\|^2+\|\rho\|^2)\notag\\
&=m_{\alpha, \beta}(\lambda)\omega_j(\|\lambda\|^2+\|\rho\|^2), \quad \lambda\in \mathfrak{a}^{\ast}.\end{align}}
 {Observe that $m_j$ are Weyl-invariant as radial functions, so the kernels $\kappa_j$ are $K$-bi-invariant. By (\ref{suppmuj}) and (\ref{mj}), we have
\[
\operatorname{supp}m_0\subset\{\lambda:0\leq\|\lambda\|^2+\|\rho\|^2\leq 2\} \text{ and }\] \[\operatorname{supp}m_j\subset\{\lambda: 2^{j-1}\leq\|\lambda\|^2+\|\rho\|^2\leq 2^{j+1}\}, \;j\in \mathbb{N}.
\]}
 {Depending on the size of $\|\rho\|$, finitely many of the above sets may be empty. To simplify the presentation, let us assume from now on that $\|\rho\|=1$, so that}
\begin{equation}\label{suppm0}
\operatorname{supp}m_0\subset\{\lambda:\|\lambda\|\leq 2\}, \;\operatorname{supp}m_1\subset\{\lambda:\|\lambda\|\leq 4\} \text{ and } \end{equation}	
\begin{equation}\label{suppmj}
\operatorname{supp}m_j\subset\{\lambda: 2^{(j-2)/2}\leq\|\lambda\|\leq 2^{(j+1)/2}\}, \quad j\geq 2.
\end{equation}

Set
\begin{equation}
h_{j}(\xi )= {\mu}_{j}(2^{j}\ln \xi )\xi ^{-1},\quad j\geq 0,  \label{hj_function}
\end{equation}%
and observe that 
 {\begin{equation}\label{hj_supp}
\operatorname{supp}(h_{0})\subset (1,e^{2}), \quad \operatorname{supp}(h_{j})\subset (e^{1/2},e^{2}), \;j\in \mathbb{N},  
\end{equation}}
and that
\begin{align}
h_{j}(e^{2^{-j}\Delta _{X}})e^{2^{-j}\Delta _{X}}& = {\mu}_{j}(2^{j}\ln
e^{2^{-j}\Delta _{X}})e^{-2^{-j}\Delta _{X}}e^{2^{-j}\Delta _{X}}
\label{25b} \\
& = {\mu}_{j}(\Delta _{X}).  \notag
\end{align}%
 {Recall that we denoted by} $p_{t}$ the heat kernel of $X$ and by $\kappa _{j}$ the kernel of the
operator $T_{j}= {\mu}_{j}(\Delta _{X})$. Then, from (\ref{25b}) it follows that
\begin{align}
\kappa _{j}(x)& = {\mu}_{j}(\Delta _{X})\delta _{0}(x)=h_{j}(e^{2^{-j}\Delta
_{X}})e^{2^{-j}\Delta _{X}}\delta _{0}(x)  \label{2.5} \\
& =h_{j}(e^{2^{-j}\Delta _{X}})p_{2^{-j}}(x).  \notag
\end{align}
Note that
\begin{eqnarray}
\sum_{j\geq 0}\kappa _{j}(x) &=&\sum_{j\geq 0} {\mu}_{j}(\Delta _{X})\delta
_{0}(x)  \notag \\
&=& {\mu}_{\alpha ,\beta }(\Delta _{X})\delta _{0}(x)=\kappa _{\alpha ,\beta }(x).
\label{kappa}
\end{eqnarray}

Our proof will be based on the following lemma, the proof of which is postponed until the end of this section. 

\begin{lemma}
\label{kjBall} \label{kjB_1}For every $j\geq 0$,
\begin{equation*}
\Vert \kappa _{j}\Vert _{L^{1}(B_1)}\leq c2^{-\left( \beta -\alpha
n/2\right) j/2}.
\end{equation*}
\end{lemma}
Here, $B_1$ denotes the geodesic unit ball on $X$, which is a homogeneous space in the sense of Coifman and Weiss,  see  \cite{ANLO}. 
\subsection{Proof of Proposition \protect\ref{T0X}}

We treat first the case of symmetric spaces. It suffices to interpolate between the $L^{\infty}$ and $L^2$ result, and then use duality. In our proof, the properties of the spherical Fourier transform on $K$-bi-invariant functions are central. For locally symmetric spaces, the required $L^p$ boundedness for $\widehat{T}_{\alpha ,\beta }^{0}$ will follow as a consequence of the $L^p$ result for $T_{\alpha, \beta}^0$ on symmetric spaces.

(i) Let $\beta >\alpha n/2$. Recall that $\kappa _{\alpha ,\beta
}=\sum\limits_{j\geq 0}\kappa _{j}$. We shall show that $\kappa _{\alpha ,\beta
}^0\in L^1(X)$, using the fact that it is compactly supported. Indeed, by (\ref{decomp}) and Lemma \ref{kjB_1}, we have
\begin{align*}
\Vert \kappa _{\alpha ,\beta }^{0}\Vert _{L^{1}(X)} &=\Vert \zeta \kappa
_{\alpha ,\beta }\Vert _{L^{1}(X)}\leq c\Vert \kappa _{\alpha ,\beta }\Vert
_{L^{1}(B_1)} \\
&\leq c\sum_{j\geq 0}\Vert \kappa _{j}\Vert _{L^{1}(B_1)}\leq
c\sum_{j\geq 0}2^{-(\beta -\alpha n/2)j/2}<c.
\end{align*}%
This implies that
\begin{equation}
\Vert T_{\alpha ,\beta }^{0}\Vert _{L^{\infty }(X)\rightarrow L^{\infty
}(X)}\leq c.  \label{tapeiro}
\end{equation}%

It remains to show the $L^2$ result for $T_{\alpha, \beta}^0$ by summing over $T_j^0$. The spherical Fourier transform properties will allow us to estimate $m_j^0$ by the known estimates of  $m_j$. By Plancherel theorem, the $K$-bi-invariance of $\zeta$ and $\kappa_{j}$,  and  (\ref{decomp}), we get that
\begin{align}
\Vert T_{j}^{0}\Vert _{L^{2}(X)\rightarrow L^{2}(X)}& \leq \Vert
m_{j}^{0}\Vert _{L^{\infty }(\mathfrak{a^{\ast }})}=\Vert \mathcal{H}(\kappa
_{j}^{0})\Vert _{L^{\infty }(\mathfrak{a^{\ast }})}  \notag \\
& =\Vert \mathcal{H}(\zeta \kappa _{j})\Vert _{L^{\infty }(\mathfrak{a^{\ast
}})}=\Vert \mathcal{H}(\zeta )\ast \mathcal{H}(\kappa _{j})\Vert _{L^{\infty
}(\mathfrak{a^{\ast }})}  \notag \\
& =\Vert \mathcal{H}(\zeta )\ast m_{j}\Vert _{L^{\infty }(\mathfrak{a^{\ast }%
})}\leq \Vert \mathcal{H}(\zeta )\Vert _{L^{1}(\mathfrak{a^{\ast }})}\Vert
m_{j}\Vert _{L^{\infty }(\mathfrak{a^{\ast }})}.  \label{tj02}
\end{align}%
But $\zeta \in S(K\backslash G/K)$. So, as it is mentioned in Section 2, its
spherical Fourier transform $\mathcal{H}(\zeta )$, belongs in $S(\mathfrak{%
a^{\ast }})^{W}\subset L^{1}(\mathfrak{a^{\ast }})$. So,
\begin{equation*}
\Vert \mathcal{H}(\zeta )\Vert _{L^{1}(\mathfrak{a^{\ast }})}\leq c(\zeta
)<\infty .
\end{equation*}%
From (\ref{tj02}), (\ref{suppmj}) and (\ref{mult}) it follows that
\begin{align*}
\Vert T_{j}^{0}\Vert _{L^{2}(X)\rightarrow L^{2}(X)}& \leq c(\zeta )\Vert
m_{j}\Vert _{L^{\infty }(\mathfrak{a^{\ast }})} \\
& =c(\zeta )\sup_{2^{^{(j- {2})/2}}\leq \Vert \lambda \Vert \leq
2^{^{(j+1)/2}}}\left\vert m_{\alpha ,\beta }\left( \lambda \right) \omega
(2^{-j}\lambda )\right\vert \\
& \leq c(\zeta )2^{-\beta j/2},\;  {j\geq 2.}
\end{align*}
 {It is  easy to see that using the same arguments for the remaining cases $j=0,1$,  an inequality of the form above is also satisfied.} Further, by the fact that $T_{\alpha ,\beta
}^{0}=\sum\limits_{j\geq 0}T_{j}^{0}$, it follows that
\begin{eqnarray}
\Vert T_{\alpha ,\beta }^{0}\Vert _{L^{2}(X)\rightarrow L^{2}(X)} &\leq
&\sum_{j\geq 0}\Vert T_{j}^{0}\Vert _{L^{2}(X)\rightarrow L^{2}(X)}  \notag
\\
&\leq &c\sum_{j\geq 0}2^{-\beta j/2}\leq c<\infty .  \label{t2}
\end{eqnarray}%
By interpolation and duality, it follows from (\ref{tapeiro}) and (\ref{t2})
that $T_{\alpha ,\beta }^{0}$ is bounded on $L^{p}(X)$, for all $p\in
[1,\infty ]$.

(ii) Let $\beta \leq \alpha n/2$.  Once again, we shall interpolate between the $L^{\infty}$ and $L^2$ result. Recall that $T_{j}^{0}=\ast \kappa _{j}^{0}$ and that $\kappa
_{j}^{0}=\zeta \kappa _{j}$. So, from Lemma \ref{kjBall} we get that
\begin{eqnarray}
\Vert T_{j}^{0}\Vert _{L^{\infty }(X)\rightarrow L^{\infty }(X)} &\leq
&\Vert \kappa _{j}^{0}\Vert _{L^{1}(X)}=\Vert \zeta \kappa _{j}\Vert
_{L^{1}(X)}  \notag \\
&\leq &c\Vert \kappa _{j}\Vert _{L^{1}(B_1)}\leq c2^{-(\beta -\alpha
n/2)j/2}.  \label{4.5}
\end{eqnarray}%
Also, we have that
\begin{equation}
\Vert T_{j}^{0}\Vert _{L^{2}(X)\rightarrow L^{2}(X)}\leq c2^{-\beta j/2}.
\label{4.50}
\end{equation}%
Interpolating between (\ref{4.5}) and (\ref{4.50}) we get that for $p\geq 2$
\begin{align*}
\Vert T_{j}^{0}\Vert _{L^{p}(X)\rightarrow L^{p}(X)}& \leq c\Vert
T_{j}^{0}\Vert _{L^{\infty }(X)\rightarrow L^{\infty }(X)}^{1-2/p}\Vert
T_{j}^{0}\Vert _{L^{2}(X)\rightarrow L^{2}(X)}^{2/p} \\
& \leq c2^{-(1-2/p)(\beta -\alpha n/2)j/2}2^{-(2/p)\beta j/2} \\
& \leq c2^{-\left( \beta -\alpha n\left( \frac{1}{2}-\frac{1}{p}\right)
\right) j/2}.
\end{align*}%
Thus,
\begin{eqnarray*}
\Vert T_{\alpha ,\beta }^{0}\Vert _{L^{p}(X)\rightarrow L^{p}(X)} &\leq
&\sum_{j\geq 0}\Vert T_{j}^{0}\Vert _{L^{p}(X)\rightarrow L^{p}(X)} \\
&\leq &c\sum_{j\geq 0}2^{-\left( \beta -\alpha n\left( \frac{1}{2}-\frac{1}{p%
}\right) \right) j/2}<\infty ,
\end{eqnarray*}%
provided that $\beta >\alpha n\left( \frac{1}{2}-\frac{1}{p}\right) $. The $%
L^{p}$-boundedness of $T_{\alpha ,\beta }^{0}$ for $p\in (1,2)$, follows by
duality.

To prove the $L^{p}$ boundedness of the local part $\widehat{T}_{\alpha
,\beta }^{0}$ of the operator on the locally symmetric space $M$, we need the following result, \cite[Proposition 13]{LOMAanna}. 
\begin{proposition}\label{transfM}
	Assume that $p\in (1, \infty)$. If the operator $T^0=\ast\kappa^0$ is $L^p(X)$-bounded, then the operator $\widehat{T^0}=\ast\kappa^0$ is $L^p(M)$-bounded.
\end{proposition}
So, for the $L^p$ result on $M$, observe first that $\widehat{T}%
_{\alpha ,\beta }^{0}$ can be defined as an operator on the group $G$, and
then, apply the local result of Proposition \ref{T0X} to conclude its
boundedness on $L^{p}(X)$. Consequently, the continuity of $\widehat{T}_{\alpha
,\beta }^{0}$ on $L^{p}(M)$ follows by Proposition \ref{transfM}. Note that the $L^p$ boundeness of $\widehat{T}%
_{\alpha ,\beta }^{0}$ holds without any restrictions on the group $\Gamma$.

\subsection{Proof of Lemma \ref{kjB_1}}

In this section, our aim is to prove estimates of the $L^{2}$-norm of the kernels
$\kappa _{j}$, which will allow us to prove Lemma \ref{kjB_1} by using the Cauchy-Schwartz inequality.

For $r>0$, set
\begin{equation*}
V_{r}=\{H\in \mathfrak{a}:\|H\|\leq r\},\text{ and }V_{r}^{+}=V_{r}\cap
\overline{\mathfrak{a}_{+}}.
\end{equation*}%
Set also
\begin{equation}
B_{r}=\{x=k_{1}(\exp H)k_{2}\in G:k_{1},k_{2}\in K,\;H\in V_{r}^{+}\}=K\exp V_{r}^{+} K.
\label{Uj}
\end{equation}
 {The set $B_r$ consists of all points on $X$ at distance at most $r$ from the origin $K$, \cite[p.1066]{ANJ}. For small radii, observe the following euclidean upper bound for volume growth: using (\ref{haar}), (\ref{delta}) and the fact  that $\sum\limits_{\alpha\in \Sigma^+}m_{\alpha}=n-d$, \cite[p.1037]{ANJ}, we have
\begin{equation}\label{vol}
|B_r|\leq c\int_{\{H\in \mathfrak{a}_+: \|H\|\leq r \} }\prod_{\alpha\in \Sigma^+}\alpha(H)^{m_\alpha} dH\leq c r^n, \quad r\leq 1.
\end{equation}
 By $G$-invariance, the same upper bound would hold for small balls of any center (and in fact, a lower bound of the same form is true, see  \cite[p.1317]{ANLO}). 
Finally, consider the annulus}
\begin{equation*}
A_{q}=B_{2^{(q+1)/2}}\backslash B_{2^{q/2}},\quad q\in \mathbb{R}.
\end{equation*}
f
The following lemma is technical but important for the proof of Lemma \ref{8}.

\begin{lemma}\label{lemmatech}
There are constants $c>0$ and $\delta \in (0,1/8)$ such that for all $j \in
\mathbb{N},$ $q \geq -j$ and $|t|\leq\delta 2^{(q+j)/2}$,
\begin{equation}  \label{lemmacrucial}
\left| e^{ite^{2^{-j}\Delta_X}}p_{2^{-j}}(x) \right| \leq c
e^{-c2^{(q+j)/2}}2^{nj/2}, \; \text{for all }x\in A_q.
\end{equation}
\end{lemma}

\begin{proof}
By (\ref{heatop}) and (\ref{semi}), we have that
\begin{equation}
e^{ite^{2^{-j}\Delta _{X}}}p_{2^{-j}}(x)=\sum_{m\geq 0}\frac{(it)^{m}}{m!}%
e^{m2^{-j}\Delta _{X}}p_{2^{-j}}(x)=\sum_{m\geq 0}\frac{(it)^{m}}{m!}%
p_{(m+1)2^{-j}}(x).  \label{3.2}
\end{equation}%
Since $p_t$ is a $K$-bi-invariant function, the same is true for $%
e^{ite^{2^{-j}\Delta_{X}}}p_{2^{-j}}$. Bearing in mind that if $x=k(\exp H)k^{\prime}\in
A_{q}$, then $\Vert H\Vert \geq 2^{q/2}$, it follows from (\ref{3.2}) and
the estimate (\ref{ostellari}) of $p_{t}(\exp H)$ that
\begin{align}
\left\vert e^{ite^{2^{-j}\Delta _{X}}}p_{2^{-j}}(\exp H)\right\vert & \leq
c\sum_{m\geq 0}\frac{|t|^{m}}{m!}p_{(m+1)2^{-j}}(\exp H)  \notag \\
& \leq c\sum_{m\in \mathbb{N}}\frac{|t|^{m}}{m!}%
((m+1)2^{-j})^{-n/2}e^{-2^{q}/4(m+1)2^{-j}}  \label{bigsum} \\
& \leq c2^{jn/2}\sum_{m\in \mathbb{N}}m^{-n/2}\frac{|t|^{m}}{m!}%
e^{-2^{q+j}/4\left( m+1\right) }  \notag \\
& \leq c2^{jn/2}\sum_{m\in \mathbb{N}}\frac{|t|^{m}}{m!}e^{-2^{q+j}/4\left(
m+1\right) }.  \notag
\end{align}
Set
\begin{equation*}
N_{1}=\left\{ m\in \mathbb{N}:m\leq 2^{(q+j)/2}\right\}, \quad N_{2}=\mathbb{N}%
\backslash N_{1},
\end{equation*}
and
\begin{equation*}
S_{k}=\sum_{m\in N_{k}}\frac{|t|^{m}}{m!}e^{-2^{q+j}/4\left( m+1\right)
},\quad k=1,2.
\end{equation*}%
From (\ref{bigsum}), we have that
\begin{equation}
\left\vert e^{ite^{2^{-j}\Delta _{X}}}p_{2^{-j}}(\exp H)\right\vert \leq
c2^{jn/2}(S_{1}+S_{2}).  \label{S1+S2}
\end{equation}%
We shall first estimate $S_{1}$. If $m\in N_1$, then $m\leq 2^{(q+j)/2}$.
So,
\begin{equation*}
e^{-2^{q+j}/4\left( m+1\right) }\leq e^{-2^{q+j}/4\left(
2^{(q+j)/2}+1\right) }\leq e^{-2^{(q+j)/2}/8},
\end{equation*}%
and
\begin{equation*}
S_{1}\leq ce^{-2^{(q+j)/2}/8}\sum_{m\in N_{1}}\frac{|t|^{m}}{m!}%
=ce^{-2^{(q+j)/2}/8}e^{|t|}.
\end{equation*}%
But $|t|\leq \delta 2^{(q+j)/2},$ and consequently
\begin{equation}
S_{1}\leq ce^{-2^{(q+j)/2}/8}e^{\delta 2^{(q+j)/2}}\leq ce^{-c2^{(q+j)/2}},
\label{S1}
\end{equation}%
since $\delta <1/8$.

To estimate $S_{2}$, we make use of Stirling's formula: $\frac{1}{m!}\leq
c\left( \frac{e}{m}\right) ^{m}$. By the estimate (\ref{ostellari}) of $%
p_{t}(\exp H)$, and the facts that $|t|\leq \delta 2^{(q+j)/2}$ and $%
m>2^{(q+j)/2}$, we have
\begin{eqnarray*}
S_{2} &=&\sum_{m\in N_{2}}\frac{|t|^{m}}{m!}e^{-2^{q+j}/4(m+1)}\leq
\sum_{m\in N_{2}}\frac{|t|^{m}}{m!}\leq c\sum_{m\in N_{2}}(\delta
2^{(q+j)/2})^{m}\left( \frac{e}{m}\right) ^{m} \\
&\leq &c\sum_{m\in N_{2}}(\delta 2^{(q+j)/2})^{m}\left( \frac{e}{2^{(q+j)/2}}%
\right) ^{m}\leq c\sum_{m\in N_{2}}(\delta e)^{m}.
\end{eqnarray*}%
But $\delta <1/8<e^{-2}$. So,
\begin{equation}
S_{2}\leq c\sum_{m\in N_{2}}(\delta e)^{m}\leq
c\sum_{m>2^{(q+j)/2}}e^{-m}\leq ce^{-2^{(q+j)/2}}.  \label{S2}
\end{equation}%
Putting together (\ref{S1+S2}), (\ref{S1}) and (\ref{S2}), the estimate (\ref%
{lemmacrucial}) follows, and the proof of the lemma is complete.
\end{proof}

We also need the following approximation lemma, \cite{ALEX, NAT}.

For $f\in C_{0}^{k}(\mathbb{R})$, $k\in \mathbb{N}$, consider the norm
\begin{equation*}
\Vert f\Vert _{C^{k}}=\Vert f\Vert _{\infty }+\Vert f^{\prime }\Vert
_{\infty }+\dots+\Vert f^{(k)}\Vert _{\infty }.
\end{equation*}

\begin{lemma}
\label{approx} Let $f\in C_{0}^{k}(\mathbb{R})$, $k\in \mathbb{N}$ and $s
>0$. Then there exist a continuous and integrable function $\psi $ and a
constant $c>0$, independent of $s $ and $f$, such that
\begin{equation*}
\operatorname{supp}\widehat{\psi }\subset \lbrack -s ,s ],\quad \Vert \widehat{%
\psi }\Vert _{\infty }\leq c,\text{ and }\Vert f-f\ast {\psi }\Vert _{\infty
}\leq c\Vert f\Vert _{C^{k}}s ^{-k}.
\end{equation*}
\end{lemma}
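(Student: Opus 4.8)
The plan is to deduce the lemma from one fixed band-limited mollifier by rescaling. Fix once and for all a function $\chi\in C_0^\infty(\mathbb{R})$ with $\operatorname{supp}\chi\subset[-1,1]$ which equals $1$ on a neighbourhood of the origin; in particular $\chi(0)=1$ and $\chi^{(l)}(0)=0$ for $1\le l\le k-1$. Put $\psi_1=\mathcal{F}^{-1}\chi$. By the Paley--Wiener theorem $\psi_1$ is a Schwartz function, hence continuous and integrable, and $\widehat{\psi_1}=\chi$. Differentiating $\widehat{\psi_1}$ at the origin, the conditions on $\chi$ translate into the moment identities
\begin{equation*}
\int_{\mathbb{R}}\psi_1(y)\,dy=1,\qquad \int_{\mathbb{R}}y^l\psi_1(y)\,dy=0,\quad 1\le l\le k-1,
\end{equation*}
for a suitable normalization of the Fourier transform.

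Given $\xi>0$, I would then set $\psi(y)=\xi\,\psi_1(\xi y)$. This $\psi$ is continuous and integrable, and $\widehat{\psi}(\tau)=\widehat{\psi_1}(\tau/\xi)=\chi(\tau/\xi)$, so $\operatorname{supp}\widehat{\psi}\subset[-\xi,\xi]$ and $\|\widehat{\psi}\|_\infty=\|\chi\|_\infty=:c$, a constant that does not depend on $\xi$. By scaling, $\psi$ still satisfies $\int_{\mathbb{R}}\psi=1$ and $\int_{\mathbb{R}}y^l\psi(y)\,dy=0$ for $1\le l\le k-1$.

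To estimate $f-f\ast\psi$, use $\int\psi=1$ to write
\begin{equation*}
(f-f\ast\psi)(x)=\int_{\mathbb{R}}\bigl(f(x)-f(x-y)\bigr)\psi(y)\,dy.
\end{equation*}
Expanding $f(x-y)$ by Taylor's formula around $x$ to order $k$ with Lagrange remainder (legitimate since $f\in C_0^k(\mathbb{R})$) and using the vanishing of the moments of orders $1,\dots,k-1$ to annihilate the intermediate terms, only the $k$-th order remainder survives, whence
\begin{equation*}
\bigl|(f-f\ast\psi)(x)\bigr|\le\frac{\|f^{(k)}\|_\infty}{k!}\int_{\mathbb{R}}|y|^k|\psi(y)|\,dy=\frac{\|f^{(k)}\|_\infty}{k!}\,\xi^{-k}\int_{\mathbb{R}}|u|^k|\psi_1(u)|\,du.
\end{equation*}
Since $\psi_1$ is Schwartz, $\int_{\mathbb{R}}|u|^k|\psi_1(u)|\,du<\infty$; absorbing it together with $1/k!$ into the constant yields $\|f-f\ast\psi\|_\infty\le c\,\|f\|_{C^k}\,\xi^{-k}$, with $c$ depending only on $k$ and the fixed function $\chi$, hence independent of $\xi$ and of $f$.

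I do not expect a genuine obstacle here: this is the standard band-limited mollifier estimate. The two points requiring a little care are, first, arranging that $\chi$ be flat to order $k$ at the origin so that the rescaled $\psi$ acquires $k-1$ vanishing moments --- this is precisely what improves the naive $O(\xi^{-1})$ error into the required $O(\xi^{-k})$ --- and, second, verifying that the constant is independent of $\xi$ and $f$, which is transparent from the clean scaling $\psi(y)=\xi\psi_1(\xi y)$ and from the linearity of the bound in $\|f^{(k)}\|_\infty\le\|f\|_{C^k}$.
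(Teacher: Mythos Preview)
Your argument is correct. The paper does not actually supply a proof of this lemma; it simply states it as ``the following approximation lemma, \cite{NAT}'' and refers to Natanson's \emph{Constructive Function Theory}. So there is no in-paper proof to compare against in detail.

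Your route via a single fixed band-limited bump $\chi\in C_0^\infty(\mathbb{R})$ with $\chi\equiv 1$ near $0$, followed by the dilation $\psi(y)=\xi\,\psi_1(\xi y)$ with $\psi_1=\mathcal{F}^{-1}\chi$, is the standard and clean way to obtain such approximants: the flatness of $\chi$ at the origin gives vanishing moments of all orders for $\psi_1$, the Taylor expansion with Lagrange remainder then kills everything below order $k$, and the scaling produces the factor $\xi^{-k}$ while keeping $\|\widehat{\psi}\|_\infty=\|\chi\|_\infty$ independent of $\xi$. All steps are justified (in particular $\psi_1$ is Schwartz, so $\int|u|^k|\psi_1(u)|\,du<\infty$), and the resulting constant depends only on $k$ and on the fixed $\chi$, hence not on $\xi$ or $f$, exactly as required. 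The only cosmetic point is your phrase ``for a suitable normalization of the Fourier transform''; you could simply fix a convention once and absorb the resulting $2\pi$-powers into $c$.
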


Finally we need the following estimates of the functions $h_{j}$ defined in (%
\ref{hj_function}):
\begin{equation}
\Vert h_{j}\Vert _{\infty }\leq c\sup_{{\xi }\in
(e^{1/2},e^{2})}| {\mu}_{j}(2^{j}\ln \xi )\xi ^{-1}|\leq c2^{-\beta j/2},
\label{2.6}
\end{equation} 
and
\begin{equation}
\Vert h_{j}\Vert _{C^{k}}\leq c2^{-\beta j/2}2^{\alpha kj/2},\quad j,k\in
\mathbb{N}.  \label{2.7}
\end{equation}%
The proofs of (\ref{2.6}) and (\ref{2.7}) are straightforward, thus omitted.

We shall now prove the following lemma, which will allow us to prove Lemma \ref{kjB_1} by using the Cauchy-Schwartz inequality.

\begin{lemma}
\label{8} Assume that $q\leq 0$. Then, there are constants $c, c_k>0$ such that  for all $j, k\in \mathbb{N},$ $q\geq
-j $, 

\begin{enumerate}
\item[(i)] $\Vert \kappa _{j}\Vert _{L^{2}(X)}\leq c2^{-\left( \beta -\frac{n%
}{2} \right)j/2}$,

\item[(ii)] $\Vert \kappa _{j}\Vert _{L^{2}(A_{q})}\leq c_k2^{-\left( \beta -%
\frac{n}{2}+k(1-\alpha)\right)j/2}2^{-kq/2} $.
\end{enumerate}
\end{lemma}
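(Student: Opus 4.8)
The plan is to bound the $L^2$-norms of $\kappa_j$ using the expression $\kappa_j = h_j(e^{2^{-j}\Delta_X})p_{2^{-j}}$ from (\ref{2.5}), Plancherel for the spherical Fourier transform, and — for part (ii) — the approximation Lemma \ref{approx} to localise in space. For (i), since $\kappa_j = m_j(\Delta_X)\delta_0$ and $m_j(\|\lambda\|^2)$ is supported where $\|\lambda\|^2 \sim 2^j$, Plancherel gives $\|\kappa_j\|_{L^2(X)}^2 = c\int_{\mathfrak{a}^*}|m_j(\|\lambda\|^2)|^2\,|\mathbf{c}(\lambda)|^{-2}\,d\lambda$. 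On the support, $|m_{\alpha,\beta}(\|\lambda\|^2)| \sim (\|\lambda\|^2)^{-\beta/2} \sim 2^{-\beta j/2}$, and the Plancherel measure of the shell $\{\|\lambda\|^2 \sim 2^j\}$ is $O(2^{nj/2})$ (the standard bound $|\mathbf{c}(\lambda)|^{-2} \le c(1+\|\lambda\|)^{n-d}$ integrated over a shell of radius $\sim 2^{j/2}$ in $\mathfrak{a}^* \cong \mathbb{R}^d$); this yields $\|\kappa_j\|_{L^2(X)}^2 \le c2^{-\beta j}2^{nj/2}$, i.e. (i). Alternatively one can run this through $\|h_j\|_\infty \le c2^{-\beta j/2}$ from (\ref{2.6}) together with $\|p_{2^{-j}}\|_{L^2(X)} \le \|p_{2^{-j-1}}\|_\infty^{1/2} \le c2^{nj/4}$ via (\ref{ostellari}) and the spectral theorem, since $h_j(e^{2^{-j}\Delta_X})$ has operator norm $\le \|h_j\|_\infty$ on $L^2$ after extracting one heat factor.

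For (ii), the idea is to write $\kappa_j$ on the annulus $A_q$ and use finite propagation / spatial localisation obtained by approximating $h_j$. Apply Lemma \ref{approx} to $f = h_j$ with a parameter $\xi = \xi_{j,q}$ to be chosen: $h_j = h_j * \psi + (h_j - h_j*\psi)$, where $\widehat{\psi}$ is supported in $[-\xi,\xi]$, $\|\widehat{\psi}\|_\infty \le c$, and $\|h_j - h_j*\psi\|_\infty \le c\|h_j\|_{C^k}\xi^{-k} \le c2^{-\beta j/2}2^{\alpha k j/2}\xi^{-k}$ by (\ref{2.7}). The term $(h_j - h_j*\psi)(e^{2^{-j}\Delta_X})p_{2^{-j}}$ contributes, via the $L^2$ bound as in (i), at most $c2^{-\beta j/2}2^{\alpha k j/2}\xi^{-k}2^{nj/4}$ to $\|\kappa_j\|_{L^2(X)}$, hence to $\|\kappa_j\|_{L^2(A_q)}$. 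For the main term $h_j*\psi(e^{2^{-j}\Delta_X})p_{2^{-j}}$: writing $h_j*\psi$ through its Fourier transform, $h_j*\psi(e^{2^{-j}\Delta_X}) = \int_{|\tau|\le\xi}\widehat{h_j}(\tau)\widehat{\psi}(\tau)e^{i\tau e^{2^{-j}\Delta_X}}\,d\tau$ (up to constants), so that on $A_q$,
\begin{equation*}
\left| h_j*\psi(e^{2^{-j}\Delta_X})p_{2^{-j}}(x)\right| \le c\int_{|\tau|\le\xi}|\widehat{h_j}(\tau)|\left|e^{i\tau e^{2^{-j}\Delta_X}}p_{2^{-j}}(x)\right|d\tau.
\end{equation*}
Choosing $\xi = \xi_{j,q}$ just below the threshold $\delta 2^{(q+j)/2}$ so that the Lemma \ref{lemmacrucial} estimate $|e^{i\tau e^{2^{-j}\Delta_X}}p_{2^{-j}}(x)| \le ce^{-c2^{(q+j)/2}}2^{nj/2}$ applies for all $|\tau| \le \xi$, and using $\|\widehat{h_j}\|_1$ (or an $L^1$ bound coming from $h_j \in C^k_0$ with support in $(e^{1/2},e^2)$), we get the pointwise bound $|\kappa_j(x)| \le c e^{-c2^{(q+j)/2}}2^{nj/2}$ on $A_q$ for the main term, up to the error term above. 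Then integrate over $A_q$: since $A_q \subset U_{2^{(q+1)/2}}$ and, for $q \le 0$, the volume of $U_{2^{(q+1)/2}}$ in $X$ is $O((2^{q/2})^n) = O(2^{nq/2})$ (small balls behave Euclideanly), the exponential decay $e^{-c2^{(q+j)/2}}$ dominates everything and the contribution is negligible compared to any power; so the whole bound is governed by the error term, giving $\|\kappa_j\|_{L^2(A_q)} \le c2^{-\beta j/2}2^{\alpha kj/2}\xi^{-k}2^{nj/4} \le c2^{-(\beta-n/2)j/2}2^{-k(1-\alpha)j/2}2^{-kq/2}$ after substituting $\xi \sim 2^{(q+j)/2}$, which is exactly (ii).

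The main obstacle will be organising the interplay between the two regimes cleanly: the approximation parameter $\xi$ must be taken $\sim 2^{(q+j)/2}$ so that Lemma \ref{lemmacrucial}'s hypothesis $|t| \le \delta 2^{(q+j)/2}$ is met on the whole frequency window $|\tau| \le \xi$, and one must check that the "local" main term — with its doubly exponential smallness $e^{-c2^{(q+j)/2}}$ against only polynomial volume growth $2^{nq/2}$ and polynomial amplitude $2^{nj/2}$ — is truly absorbed, leaving the $C^k$ tail $\|h_j\|_{C^k}\xi^{-k}$ as the dominant contribution. A secondary technical point is the precise volume estimate $\mathrm{vol}(U_r) \le cr^n$ for $r \le 1$ on $X$, and the corresponding harmless behaviour on $M$ (where for the local part $\widehat{T}^0_{\alpha,\beta}$ one uses that $\Gamma$ is torsion-free and discrete so small balls in $M$ are isometric to small balls in $X$); both are standard but should be cited. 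Everything else is bookkeeping with Plancherel, Stirling-free this time, and the elementary bounds (\ref{2.6})–(\ref{2.7}) already granted.
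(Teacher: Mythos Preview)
Your proposal is correct and follows essentially the same route as the paper: for (i) you give exactly the paper's argument via $\|h_j\|_\infty\|p_{2^{-j}}\|_{L^2(X)}$ (your Plancherel alternative is an equivalent variant), and for (ii) you use the same decomposition $h_j=(h_j-h_j\ast\psi)+h_j\ast\psi$ from Lemma~\ref{approx} with $\xi\sim\delta\,2^{(q+j)/2}$, bounding the tail term through $\|h_j\|_{C^k}$ and (\ref{3.10b}), and the main term pointwise via Lemma~(\ref{lemmacrucial}) followed by the small-ball volume estimate $|U_{2^{(q+1)/2}}|\le c\,2^{nq/2}$ for $q\le 0$. The only cosmetic difference is that the paper controls the main term with $\|\hat h_j\|_\infty\le\|h_j\|_1$ times the length of the $t$-interval, whereas you invoke $\|\hat h_j\|_1$; either way the factor is polynomial in $2^j$ and is swallowed by $e^{-c2^{(q+j)/2}}$ exactly as you say.
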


\begin{proof}
(i) By the semigroup property of the heat operator and the estimate (\ref%
{ostellari}) of $p_{t}(x,y)$, we have that
\begin{align*}
\Vert p_{t}(\cdot ,y)\Vert _{L^{2}(X)}^{2}&
=\int_{X}p_{t}^{2}(x,y)dx=\int_{X}p_{t}(x,y)p_{t}(y,x)dx \\
& =p_{2t}(y,y)=p_{2t}(o)\leq ct^{-n/2}.
\end{align*}%
It follows that
\begin{equation}
\Vert p_{2^{-j}}(\cdot ,y)\Vert _{L^{2}(X)}\leq c\sqrt{2^{jn/2}}= c2^{jn/4},
\label{3.10b}
\end{equation}%
for any $y\in X$.

Recall now that $\kappa _{j}(x)=h_{j}(e^{2^{-j}\Delta _{X}})p_{2^{-j}}(x)$.
So, combining (\ref{3.10b}) and (\ref{2.6}) we get that
\begin{equation}
\Vert \kappa _{j}\Vert _{L^{2}(X)}\leq \Vert h_{j}\Vert _{\infty }\Vert
p_{2^{-j}}\Vert _{L^{2}(X)}\leq c2^{-\beta j/2}2^{jn/4}=c2^{-(\beta-n/2)j/2}.
\label{3.6}
\end{equation}

(ii) Let us consider a function $\psi _{j,q}$, satisfying Lemma \ref{approx}%
, i.e.
\begin{equation}  \label{3.7}
\Vert \widehat{\psi }_{j,q}\Vert _{\infty } <c,\quad \Vert h_{j}-h_{j}\ast {%
\psi }_{j,q}\Vert _{\infty }\leq \Vert h_{j}\Vert _{C^{k}}2^{-k(q+j)/2},
\end{equation}
and
\begin{equation*}
\ \operatorname{supp} \widehat{\psi }_{j,q} \subset \lbrack -\delta
2^{(q+j)/2},\delta 2^{(q+j)/2}], 
\end{equation*}
where the constant $c$ in (\ref{3.7}) is independent of $j$ and $q$.
Combining (\ref{2.7}) with (\ref{3.7}), it follows that
\begin{equation}
\Vert h_{j}-h_{j}\ast {\psi }_{j,q}\Vert _{\infty }\leq c2^{-\beta
j/2}2^{k\alpha j/2}2^{-k(q+j)/2}.  \label{convhj}
\end{equation}

Write
\begin{align}
\kappa _{j}(x)& =h_{j}\left( e^{2^{-j}\Delta _{X}}\right) p_{2^{-j}}(x)
\label{3.8} \\
& =\left( \left( h_{j}-h_{j}\ast {\psi }_{j,q}\right) +h_{j}\ast {\psi }%
_{j,q}\right) (e^{2^{-j}\Delta _{X}})p_{2^{-j}}(x).  \notag
\end{align}%
Thus
\begin{align}
\Vert \kappa _{j}\Vert _{L^{2}(A_{q})}& \leq \Vert \left( h_{j}-h_{j}\ast {%
\psi }_{j,q}\right) \left( e^{2^{-j}\Delta _{X}}\right) p_{2^{-j}}(x)\Vert
_{L^{2}(A_{q})}  \notag \\
& +\Vert (h_{j}\ast {\psi }_{j,q})(e^{2^{-j}\Delta _{X}})p_{2^{-j}}(x)\Vert
_{L^{2}(A_{q})}:=I_{1}+I_{2}.  \label{3.9}
\end{align}%
From (\ref{3.10b}), it follows that
\begin{align*}
I_{1} & \leq \Vert h_{j}-h_{j}\ast {\psi }_{j,q}\Vert _{\infty }\Vert
p_{2^{-j}}\Vert _{L^{2}(X)} \\
& \leq \Vert h_{j}-h_{j}\ast {\psi }_{j,q}\Vert _{\infty }2^{jn/4}.
\end{align*}%
But, by (\ref{convhj}),
\begin{equation*}
\Vert h_{j}-h_{j}\ast {\psi }_{j,q}\Vert _{\infty }\leq c2^{-\beta
j/2}2^{k\alpha j/2}2^{-k(q+j)/2}.
\end{equation*}
So,
\begin{equation}
I_{1}\leq c2^{-(\beta -n/2+k(1-\alpha ))j/2}2^{-kq/2}.  \label{3.10}
\end{equation}%
Let us now estimate $I_{2}$. By the inversion formula of the euclidean
Fourier transform, we have that
\begin{align}
(h_{j}\ast {\psi }_{j,q})(e^{2^{-j}\Delta _{X}})& =c\int_{\mathbb{R}}%
\widehat{(h_{j}\ast {\psi }_{j,q})}(t)e^{ite^{2^{-j}\Delta _{X}}}dt  \notag
\\
& =c\int_{\mathbb{R}}\hat{h}_{j}(t)\hat{\psi}_{j,q}(t)e^{ite^{2^{-j}\Delta
_{X}}}dt.  \label{inverse}
\end{align}%
Bearing in mind that $\operatorname{supp}\hat{\psi}_{j,q}\subset \left[ -\delta
2^{(q+j)/2},\delta 2^{(q+j)/2}\right] $, we get that
\begin{align}
\left\vert (h_{j}\ast {\psi }_{j,q})(e^{2^{-j}\Delta
_{X}})p_{2^{-j}}(x)\right\vert &\leq c\int\limits_\mathclap{{|t|\leq\delta
2^{(q+j)/2}}}|\hat{h}_{j}(t)||\hat{\psi}_{j,q}(t)| \left\vert
e^{ite^{2^{-j}\Delta _{X}}}p_{2^{-j}}(x)\right\vert dt  \notag \\
& \leq c\Vert \hat{h}_{j}\Vert _{\infty }\Vert \hat{\psi}_{j,q}\Vert
_{\infty }\int\limits_\mathclap{{|t|\leq\delta 2^{(q+j)/2}}}\left\vert
e^{ite^{2^{-j}\Delta _{X}}}p_{2^{-j}}(x)\right\vert dt.  \notag
\end{align}%
But from Lemma \ref{lemmatech} we have that
\begin{equation*}
\left\vert e^{ite^{2^{-j}\Delta _{X}}}p_{2^{-j}}(x)\right\vert \leq
ce^{-c2^{(q+j)/2}}2^{nj/2},\;\text{for all }x\in A_{q}\text{ and }\left\vert
t\right\vert \leq \delta 2^{(q+j)/2}\text{.}
\end{equation*}
So,
\begin{align}
\left\vert (h_{j}\ast {\psi }_{j,q})(e^{2^{-j}\Delta
_{X}})p_{2^{-j}}(x)\right\vert & \leq c\Vert \hat{h}_{j}\Vert _{\infty
}\Vert \hat{\psi}_{j,q}\Vert _{\infty }\int\limits_\mathclap{{|t|\leq \delta
2^{(q+j)/2}}}e^{-c2^{(q+j)/2}}2^{jn/2}dt  \notag \\
& \leq c\Vert \hat{h}_{j}\Vert _{\infty }\Vert \hat{\psi}_{j,q}\Vert
_{\infty }\delta 2^{(q+j)/2}e^{-c2^{(q+j)/2}}2^{jn/2}  \notag \\
& \leq c\Vert {h}_{j}\Vert _{1}\Vert \hat{\psi}_{j,q}\Vert _{\infty
}e^{-c2^{(q+j)/2}}2^{jn/2},  \label{convinf}
\end{align}
where in the last step we used the inequality $\Vert \hat{h}_{j}\Vert
_{\infty }\leq \Vert {h}_{j}\Vert _{1}$.

Next, recall that from (\ref{2.6}), we have that $\Vert {h}_{j}\Vert
_{\infty }\leq c2^{-\beta j/2}$. Also, by (\ref{hj_supp}), $\operatorname{supp}%
h_{j}\subset (e^{1/2},e^{2})$. These yield that
\begin{equation}
\Vert h_{j}\Vert _{1}\leq c2^{-\beta j/2}.  \label{h_jL1}
\end{equation}%
Also, from   {(\ref{3.7})}, we have that $\Vert \hat{\psi}_{j,q}\Vert
_{\infty }<c$. Combining this with (\ref{h_jL1}) and (\ref{convinf}) we
deduce that
\begin{equation}
\left\vert (h_{j}\ast {\psi }_{j,q})(e^{2^{-j}\Delta
_{X}})p_{2^{-j}}(x)\right\vert \leq c2^{-\beta j/2}e^{-c2^{(q+j)/2}}2^{jn/2}.
\label{CS}
\end{equation}%
Finally, note that if $q\leq 0$,   {then by (\ref{vol})},
\begin{equation*}
|B_{2^{(q+1)/2}}|\leq c2^{qn/2}\text{ and }A_{q}\subset B_{2^{(q+1)/2}}.
\end{equation*}%
So, by (\ref{CS}), it follows that
\begin{align*}
\Vert (h_{j}\ast {\psi }_{j,q})(e^{2^{-j}\Delta _{X}})p_{2^{-j}}\Vert
_{L^{2}(A_{q})}& \leq |A_{q}|^{1/2}\Vert (h_{j}\ast {\psi }%
_{j,q})(e^{2^{-j}\Delta _{X}})p_{2^{-j}}\Vert _{\infty } \\
& \leq c|B_{2^{(q+1)/2}}|^{1/2}2^{-\beta j/2}e^{-c2^{(q+j)/2}}2^{jn/2} \\
& \leq c2^{qn/4}2^{-\beta j/2}e^{-c2^{(q+j)/2}}2^{jn/2} \\
& \leq c2^{(q+j)n/4}2^{-\beta j/2}e^{-c2^{(q+j)/2}}2^{jn/4}.
\end{align*}%
Using that
\begin{equation*}
e^{-cx}x^{n/2}  {\leq c_{k}}x^{-k},\;c, c_{k}>0,\text{ for every }x\geq 1\text{
and }k\in \mathbb{N},
\end{equation*}%
we obtain that
\begin{align}
I_{2}& =\Vert (h_{j}\ast {\psi }_{j,q})(e^{2^{-j}\Delta
_{X}})p_{2^{-j}}\Vert _{L^{2}(A_{q})} \leq c_k2^{-\beta
j/2}2^{-k(q+j)/2}2^{jn/4}  \notag \\
&=c_k2^{-(\beta -n/2+k)j/2}2^{-kq/2}  \notag \\
& \leq c_k2^{-(\beta -n/2+k(1-\alpha ))j/2}2^{-kq/2}.  \label{3.11}
\end{align}%
From (\ref{3.11}) and (\ref{3.10}) it follows that
\begin{equation*}
\Vert \kappa _{j}\Vert _{L^{2}(A_{q})}\leq I_{1}+I_{2}\leq c_k2^{-(\beta
-n/2+k(1-\alpha ))j/2}2^{-kq/2},
\end{equation*}%
and the proof of the lemma is complete. 
\end{proof}
\subsubsection{Proof of Lemma \ref{kjB_1}}

  {Recall that by (\ref{Tjdef}) we have $\kappa_0=\mathcal{H}^{-1}m_0$. So, by (\ref{mj}), (\ref{suppm0}) and  the inversion formula (\ref{inversion}), we have
	\[
	|\kappa_0(x)|\leq \int_{\|\lambda\|\leq 2}|m_{\alpha, \beta}(\lambda)\omega_0(\|\lambda\|^2+\|\rho\|^2)\varphi_{\lambda}(x)|\frac{d\lambda}{|\textbf{c}(\lambda)|^2}.
	\]Using (\ref{sphericalphi}) and (\ref{harish}), we immediately obtain the trivial estimate
	\[|\kappa_0(\exp H)|\leq c (1+\|H\|)^ae^{-\rho(H)}, \;H\in \overline{\mathfrak{a}_{+}}. \]
	Thus, $\kappa_0$ is integrable in $B_1$, with 
	\begin{equation}\label{k0}
	\|\kappa_0\|_{L^1(B_1)}\leq c.
	\end{equation}}
   {For $j\geq 1$ write }
\begin{equation}
\int_{|x|\leq 1}|\kappa _{j}(x)|dx=\int_{|x|\leq 2^{-(1-\alpha
)j/2}}|\kappa _{j}(x)|dx+\int_{2^{-(1-\alpha )j/2}\leq |x|\leq 1}|\kappa
_{j}(x)|dx.  \label{B_1}
\end{equation}%
Using Lemma \ref{8} and the fact that by (\ref{vol}), $|B_r|\leq cr^{n},\;r\leq 1$, 
the Cauchy-Schwartz inequality implies that
\begin{align}
\int_{|x|\leq 2^{-(1-\alpha )j/2}}|\kappa _{j}(x)|dx& \leq
|B_{2^{-(1-\alpha )j/2}}|^{1/2}\Vert \kappa _{j}\Vert _{L^{2}(X)}
\label{4.1} \\
& \leq c2^{-(1-\alpha )jn/4}2^{-\beta j/2}2^{jn/4}  \notag \\
& \leq c2^{-(\beta -\alpha n/2)j/2}.  \notag
\end{align}%
Set
\begin{equation*}
{A}_{\ell }=\left\{ x\in G:2^{-(1-\alpha )(\ell +1)/2}\leq |x|\leq
2^{-(1-\alpha )\ell /2}\right\} ,
\end{equation*}%
and note that
\begin{equation*}
\left\{ x\in G:2^{-(1-\alpha )j/2}\leq |x|\leq 1\right\} \subset \cup _{\ell
=0}^{j-1}{A}_{\ell }.
\end{equation*}%
Note also that if $q=-(1-\alpha )(\ell +1)$, $\ell =0,1,...,j-1$, then, $%
0\geq q\geq -j$.

It follows that
\begin{align}
\int_{2^{-(1-\alpha )j/2}\leq |x|\leq 1}|\kappa _{j}(x)|dx& \leq \sum_{\ell
=0}^{j-1}\int_{{A}_{\ell }}|\kappa _{j}(x)|dx  \notag \\
& \leq \sum_{\ell =0}^{j-1}|{A}_{\ell }|^{1/2}\Vert \kappa _{j}\Vert _{L^{2}(%
{A}_{\ell })}  \notag \\
& \leq \sum_{\ell =0}^{j-1}|{B}_{2^{-(1-\alpha )\ell /2}}
|^{1/2}\Vert \kappa _{j}\Vert _{L^{2}({A}_{\ell })}  \label{le6a} \\
& \leq c\sum_{\ell =0}^{j-1}2^{-(1-\alpha )\ell n/4}\Vert \kappa _{j}\Vert
_{L^{2}({A}_{\ell })}.  \notag
\end{align}%
But, from Lemma \ref{8},
\begin{equation}
\Vert \kappa _{j}\Vert _{L^{2}(A_{q})}\leq c_k2^{-(\beta -n/2+k(1-\alpha
))j/2}2^{-kq/2},\text{ }  \label{le6b}
\end{equation}%
for all $j,k\in \mathbb{N},$ and $q\in \left[ -j,0\right] $.

Choosing $k>n/2$, from (\ref{le6a}) and (\ref{le6b}), it follows that
\begin{align}
\int_{2^{-(1-\alpha )j/2}\leq |x|\leq 1}|\kappa _{j}(x)|dx& \leq c\sum_{\ell
=0}^{j-1}2^{-(1-\alpha )\ell n/4}2^{-(\beta -n/2+k(1-\alpha
))j/2}2^{k(1-\alpha )\ell /2}  \notag \\
& \leq c2^{-(\beta -n/2+k(1-\alpha ))j/2}\sum_{\ell
=0}^{j-1}2^{(k-n/2)(1-\alpha )\ell /2}  \notag \\
& \leq c2^{-(\beta -n/2+k(1-\alpha ))j/2}2^{(k-n/2)(1-\alpha )j/2}  \notag \\
& \leq c2^{-j\beta /2}2^{jn/4}2^{-nj/4}2^{nj\alpha /4}  \label{4.3} \\
& \leq c2^{-(\beta -\alpha n/2)j/2}.  \notag
\end{align}
Combining (\ref{4.1}) and (\ref{4.3}), we obtain that
\begin{equation*}
\Vert \kappa _{j}\Vert _{L^{1}(B_1)}\leq c2^{-(\beta -\alpha n/2)j/2},
\end{equation*}%
and the proof of the lemma is complete.

\section{The part at infinity}

In this section we prove the $L^{p}$-boundedness of $T_{\alpha ,\beta
}^{\infty }$ (resp. $\widehat{T}_{\alpha ,\beta }^{\infty }$), which
combined with the $L^{p}$-boundedness of $T_{\alpha ,\beta }^{0}$ (resp. $%
\widehat{T}_{\alpha ,\beta }^{0}$) proved in Section 3, finish the proof of
Theorem 1.

\begin{proposition}
\label{infinityX} Assume that $\alpha \in \left( 0,1\right) $ and that $M$
belongs in the class (KS). Then the operator ${T}_{\alpha ,\beta }^{\infty }$
(resp. $\widehat{T}_{\alpha ,\beta }^{\infty }$) is bounded on $L^{p}(X)$
(resp. on $L^{p}(M)$) for all $p\in \left( 1,\infty \right) $.
\end{proposition}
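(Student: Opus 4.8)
The plan is to estimate the part at infinity $\kappa_{\alpha,\beta}^{\infty}=(1-\zeta)\kappa_{\alpha,\beta}$ directly via the Kunze--Stein phenomenon, reducing both the symmetric space and the locally symmetric space cases to a single weighted $L^1$-estimate for the kernel. By \eqref{KSX} (resp. \eqref{KunzeM}), it suffices to show that for every $p\in(1,\infty)$ there is an exponent $s=s(p)>0$ such that
\begin{equation*}
\int_G |\kappa_{\alpha,\beta}^{\infty}(g)|\,\varphi_{-i\rho_p}(g)^{s}\,dg<\infty ,
\end{equation*}
where $\rho_p=|2/p-1|\rho$; in the symmetric space case $s=1$. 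Since $\kappa_{\alpha,\beta}^{\infty}=\sum_{j\geq 0}(1-\zeta)\kappa_j$, the natural approach is to prove a summable-in-$j$ bound
\begin{equation*}
\int_G |(1-\zeta)(g)\,\kappa_j(g)|\,\varphi_{-i\rho_p}(g)^{s}\,dg\leq c\,2^{-\varepsilon j}
\end{equation*}
for some $\varepsilon>0$, and then sum. Here $\kappa_j(x)=h_j(e^{2^{-j}\Delta_X})p_{2^{-j}}(x)$ from \eqref{2.5}.

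First I would split the region $|x|\geq 1/2$ dyadically into the annuli $A_q$ for $q\geq 0$ (equivalently $2^{q/2}\leq|x|\lesssim 2^{(q+1)/2}$) and estimate each piece by Cauchy--Schwarz against the spherical function weight: on $A_q$ one has the standard bound $\varphi_{-i\rho_p}(\exp H)\leq c\,(1+\|H\|)^{a}e^{\langle\rho_p-\rho,H\rangle}$ for $H\in\overline{\mathfrak a_+}$, so $\varphi_{-i\rho_p}(g)^{s}$ grows at most like $e^{s\langle\rho_p,H\rangle}$ up to polynomial factors. Combining this with the volume growth $|A_q|\leq c\,e^{c2^{q/2}}$ would in general be too lossy, so the key is that the pointwise bound for $e^{ite^{2^{-j}\Delta_X}}p_{2^{-j}}$ on $A_q$ (from the crucial Lemma, cf.\ \eqref{lemmacrucial}) decays like $e^{-c2^{(q+j)/2}}$, and one gains additional Gaussian decay directly from the heat kernel estimate \eqref{ostellari}: for $x\in A_q$ and $|x|=\|H\|\geq 2^{q/2}$, $p_{(m+1)2^{-j}}(\exp H)$ carries the factor $e^{-\|H\|^2/4(m+1)2^{-j}}$. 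I would run the same $N_1/N_2$ decomposition of the exponential series as in the crucial Lemma but now keeping the full Gaussian weight, so that the resulting bound for $|\kappa_j(x)|$ on $A_q$ (after applying Lemma \ref{approx} to replace $h_j$ by $h_j\ast\psi_{j,q}$ and the Fourier inversion \eqref{inverse}) decays superexponentially in $2^{(q+j)/2}$ — fast enough to beat both the volume factor $e^{c2^{q/2}}$ and the exponential weight $e^{s\langle\rho_p,H\rangle}\leq e^{c2^{q/2}}$ and still leave a factor $2^{-\varepsilon j}$.

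Concretely, the scheme for the symmetric space bound is: (a) on each $A_q$, $q\geq 0$, estimate $\int_{A_q}|\kappa_j|\varphi_{-i\rho_p}\,dg\leq |A_q|\cdot\sup_{A_q}\big(|\kappa_j|\varphi_{-i\rho_p}\big)$, or better Cauchy--Schwarz with $\|\varphi_{-i\rho_p}\|_{L^2(A_q)}$; (b) insert the refined pointwise estimate for $\kappa_j$ on $A_q$ obtained as above, which for a suitable choice of the regularization parameter $k$ (taken large depending on $p$) is bounded by $c\,2^{jn/2}2^{-\beta j/2}e^{-c2^{(q+j)/2}}$ times, crucially, an extra Gaussian/exponential gain in $2^q$ coming from the heat kernel; (c) sum over $q\geq 0$ using that $\sum_q e^{c2^{q/2}-c'2^{(q+j)/2}}\leq c\,e^{-c''2^{j/2}}$ when the superexponential term dominates; (d) sum the resulting $2^{-\varepsilon j}$ over $j\geq 0$. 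For the locally symmetric case, since by hypothesis $M\in(\mathrm{KS})$, exactly the same kernel integral appears in \eqref{KunzeM} with $\varphi_{-i\eta_\Gamma}^{s(p)}$ in place of $\varphi_{-i\rho_p}$; because $\eta_\Gamma\in\mathfrak a^{\ast}$ and the superexponential decay in $|x|$ of $\kappa_j^{\infty}$ kills \emph{any} fixed exponential weight, the same computation applies verbatim with $\rho_p$ replaced by $\eta_\Gamma$ and the exponent $s(p)$ carried along.

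The main obstacle I anticipate is bookkeeping the competition, on the annuli $A_q$ with $q\geq 0$, between the exponential volume growth $e^{c2^{q/2}}$ of $X$ (and the exponential growth of $\varphi_{-i\rho_p}$) on one side and the decay of $|\kappa_j^{\infty}|$ on the other: the naive bound $|\kappa_j|\lesssim 2^{jn/2}e^{-c2^{(q+j)/2}}$ only has decay rate $2^{(q+j)/2}$, which for large $q$ and bounded $j$ is \emph{not} obviously enough to beat $e^{c2^{q/2}}$ unless one extracts the sharper Gaussian factor $e^{-\|H\|^2/4(m+1)2^{-j}}\sim e^{-c 2^{q+j}}$ from \eqref{ostellari} before performing the $N_1/N_2$ split. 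Making that extraction clean — i.e.\ propagating the full Gaussian through the Fourier inversion and the approximation Lemma while keeping the powers of $2^j$ controlled by the available factor $2^{-\beta j/2}$ from \eqref{2.6}--\eqref{h_jL1} — is the delicate point; once it is done, summability in both $q$ and $j$, and hence Proposition \ref{infinityX}, follows without restriction on $\beta$ (beyond $\beta\geq 0$) and for all $p\in(1,\infty)$, which is consistent with the statement.
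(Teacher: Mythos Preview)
Your approach is genuinely different from the paper's, and it has a real gap at precisely the point you flag as ``the main obstacle''.

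The paper does \emph{not} carry the heat-kernel machinery used in the local part out to infinity. Instead it works on the spectral side: it shows that $m_{\alpha,\beta}$ extends holomorphically to a tube $\mathfrak a^{\ast}+ivC_\rho$ with $v>v_\Gamma(p)$ and satisfies symbol-type derivative bounds $|\partial^k m_{\alpha,\beta}(\lambda)|\le c\langle\lambda\rangle^{-|k|(1-\alpha)}$ there. Then it invokes the Anker / Lohou\'e--Marias estimate (Proposition~5 of \cite{AN} as used in \cite{LOMAjga}) which, after a dyadic decomposition $\{U_{j+1}\setminus U_j\}_{j\ge1}$ in $|g|$, bounds each annular integral $I_j$ by $j^{-N}j^{(d-1)/2}$ times integrals of $\langle\lambda\rangle$-weighted derivatives of $m$; choosing $N$ large makes $\sum_j I_j<\infty$. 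The exponential cancellation against $\delta(H)\varphi_{-i\eta_\Gamma}^{s(p)}$ is produced by the contour shift inside the tube, not by any Gaussian from the heat kernel.

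The gap in your route is that the constant $c$ in \eqref{lemmacrucial} is absolute (of order $1/8-\delta$), while the weight you must beat on $A_q$ is $\delta(H)\varphi_{-i\rho_p}(\exp H)\lesssim e^{(\rho+\rho_p)(H)}$, i.e.\ $e^{(1+|2/p-1|)\|\rho\|\,|H|}$ up to polynomials. There is no reason $1/8$ dominates $(1+|2/p-1|)\|\rho\|$; in higher rank $\|\rho\|$ is typically large. Your proposed fix---``extract the sharper Gaussian $e^{-\|H\|^2/4(m+1)2^{-j}}\sim e^{-c2^{q+j}}$''---does not survive the $N_1/N_2$ split: the bound $e^{-c2^{(q+j)/2}}$ \emph{is} exactly what the Gaussian gives once you allow $m$ up to $2^{(q+j)/2}$ in $N_1$. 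If you split earlier (say $m\le 2^{j}$) to keep a stronger Gaussian, then the Stirling control on $N_2$ forces $|t|\lesssim 2^{j}$, hence $\operatorname{supp}\hat\psi_{j,q}\subset[-\delta 2^{j},\delta 2^{j}]$, and the approximation error $I_1$ in Lemma~\ref{8} loses all decay in $q$. Using the sharp estimate \eqref{heat} instead of \eqref{ostellari} buys you one factor $e^{-\langle\rho,H\rangle}$, but you still need to kill $e^{\rho_p(H)}$ with the residual $e^{-c2^{(q+j)/2}}$, and for $p$ near $1$ or $\infty$ and $j$ bounded this again comes down to $1/8$ versus $|2/p-1|\|\rho\|$. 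In short, the heat-kernel pointwise scheme does not, by itself, produce exponential decay in $|x|$ with a constant you can choose; the holomorphic extension of the multiplier does, and that is what the paper exploits.
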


For the proof of the proposition above we shall make use, as in \cite%
{AN, LOMAjga, EFFI}, of the Kunze and Stein phenomenon. We shall give the
proof of the $L^{p}$-boundedness only for $\widehat{T}_{\alpha ,\beta
}^{\infty }$. The case of $T_{\alpha ,\beta }^{\infty }$ is similar, thus
omitted. For that we need to introduce some notation.

For $p\in (1,\infty )$, set
\begin{equation}
v_{\Gamma }(p)=2\min \{(1/p),(1/p^{\prime })\}\frac{\|\eta _{\Gamma }\|}{%
\|\rho\| }+|(2/p)-1|,  \label{v(p)}
\end{equation}%
where $p^{\prime }$ is the conjugate of $p$ and $\eta _{\Gamma }\in
\mathfrak{a^{\ast }}$ is the vector appearing in (\ref{kunzeM}). Recall that $\|\eta _{\Gamma }\|=(\|\rho\|^2-\lambda_{0})^{1/2}$, where $\lambda_{0}$ the bottom of the spectrum of the Laplacian $\Delta_{M}$. Note that $v_{\Gamma }(p)\leq 1$.

Consider a bounded, Weyl invariant function $m(\lambda)$, $\lambda \in \mathfrak{a}^{\ast}$. For $N\in \mathbb{N}$, $v\in \mathbb{R}$ and $\theta >0 $, we say that a multiplier $m(\lambda)$, $\lambda \in \mathfrak{a^*}$, belongs in the class $\mathcal{M}
(v,N,\theta )$, if

\begin{itemize}
	\item $m$ is analytic inside the tube $\mathcal{T}^v=\mathfrak{a^*}
	+ivC_{\rho}$ and
	
	\item for all multi-indices $k\in \mathbb{N}$ with $|k|\leq N$, $\partial ^{k}m(\lambda )$
	extends continuously to the whole of $\mathcal{T}^{v}$ with
	\begin{equation} \label{decayrate1}
	|\partial ^{k}m(\lambda )|\leq c(1+\|\lambda\| ^{2})^{-|k|\theta /2}:=\langle\lambda
	\rangle^{-|k|\theta }. 
	\end{equation}
\end{itemize}

Let $\kappa=\mathcal{H}^{-1}m $, and denote by $\kappa ^{\infty }$ its part away from
the origin. Consider the convolution operator $\widehat{T}_{\kappa }^{\infty }$. The following result will be applied  to prove Proposition \ref{infinityX}.

\begin{proposition}\label{propos1} 
	Fix $p\in (1,\infty )$ and consider a  Weyl-invariant function $m\in
	\mathcal{M}(v,N,\theta )$, $\theta \in (0,1)$, with $v>v_{\Gamma }(p)$ and $N=\left[ \frac{n+1}{
		2\theta }\right] +1$. Then, $\widehat{T}_{\kappa }^{\infty }$
	is bounded on $L^{p}(M).$
\end{proposition}

Let us assume for the moment  that this result is valid. Then, note that 
	the multiplier $m_{\alpha ,\beta }$ belongs in the class
$\mathcal{M}(v,N, \theta)$, for $\theta =1-\alpha$. Indeed, $m_{\alpha ,\beta }\left( \lambda \right) $ has poles only at $\lambda =i\rho $ and the points in its Weyl orbit. So, the function $\lambda \longrightarrow m_{\alpha
	,\beta }\left( \lambda \right) $ is analytic in the tube $\mathcal{T}^v=\mathfrak{%
	a^{\ast }}+ivC_{\rho }$, $v\in(v_{\Gamma}(p),1)$. Secondly, for $\lambda \in \mathcal{T}^v$, it is
straightforward to see that
\begin{equation}\label{in2}
\left\vert \partial ^{k}m_{a,\beta }(\lambda )\right\vert \leq c(1+\|\lambda
\|)^{-\beta -|k|(1-a)}\leq c(1+\|\lambda \|^{2})^{-|k|(1-\alpha )/2},
\end{equation}
for every multi-index $k$. Thus, it follows from Proposition \ref{propos1} that $\widehat{T}^{\infty}_{\alpha, \beta}$ is bounded on $L^p(M)$, for all $p\in (1, \infty)$, and the proof of Theorem \ref{th1} is complete.

It remains to prove Proposition \ref{propos1}.
Since $M$ belongs in the class (KS), then, according to Kunze and Stein
phenomenon, we have that
\begin{align*}
	||\widehat{T}^{\infty }||_{L^{p}(M)\rightarrow L^{p}(M)}
	&\leq \int_{G}\left\vert \kappa ^{\infty }(x)\right\vert
	\varphi _{-i\eta _{\Gamma }}(x)^{s(p)}dx \\
	&\leq c\int_{|x|\geq 1/2}\left\vert \kappa(x)\right\vert
	\varphi _{-i\eta _{\Gamma }}(x)^{s(p)}dx.
\end{align*}%
We shall deal only with the integral for $|x|\geq 1$; the case for $1/2\leq
|x|\leq 1$ is trivial due to compactness. To estimate the integral for $%
\left\vert x\right\vert \geq 1$, we proceed as in the proof of Theorem 1 in
\cite{LOMAjga}, which is based on Proposition 5 of \cite{AN}. Using (\ref{Uj}), we have
\begin{align}
\int_{|x|\geq 1}|\kappa(x)|\varphi _{-i\eta _{\Gamma
}}(x)^{s(p)}dx& =\sum_{j\geq 1}\int_{B_{j+1}\backslash B_{j}}|\kappa(x)|\varphi _{-i\eta _{\Gamma }}(x)^{s(p)}dx\notag\\
   :&=\sum_{j\geq 1} I_j.\label{I}
\end{align}
Thus, it is sufficient to estimate the integrals in these annuli ({actually the proof in \cite{AN} uses a polyhedral variant of the balls $B_j$}).
 Set $b=n-d$, $d=\dim \mathfrak{a^{\ast }}=\operatorname{rank}X$ and let $b^{\prime }$
be the smallest integer $\geq b/2$. In \cite[p.645]{LOMAjga}, using \cite[p.608]{AN}, it is proved that   {for any $\theta>0$}, if   {$m\in \mathcal{M}(v, N, \theta)$,} then for $j\geq 1$ and every multi-index $k$ with $|k|\leq N$, we have
\begin{equation}
I_j \leq cj^{-N}j^{(d-1)/2}\notag \sum_{0\leq |k|\leq N}\left( \int_{\mathfrak{a\ast
}}(\langle \lambda \rangle^{b^{\prime }-N+|k|}|\partial _{\lambda }^{k}m(\lambda +i\rho)|)^{2}d\lambda \right)^{1/2}.  
\end{equation}\label{I_j}
We shall now modify the arguments used in the proof of \cite[Theorem 1]{LOMAjga} to control $I_j$. There, it is assumed that the decay rate of the multiplier derivatives (\ref{decayrate1}) holds true for $\theta=1$. We show that the result is still valid for $\theta \in (0,1)$ (increasing the number of  derivatives we have to control).
Let $m$ satisfy 
\begin{equation} \label{decayrate}
|\partial ^{k}m(\lambda )|\leq c(1+\|\lambda \|^{2})^{-|k|\theta
/2}:=\langle \lambda \rangle^{-|k|\theta },  
\end{equation} 
for some fixed $\theta\in (0,1)$. If $N$ is large enough, say $N>\frac{n+1}{2\theta }\geq
\frac{2b^{\prime }+d}{2\theta }$, using  (\ref{decayrate})  it follows  that
\begin{align}\label{ineq}
I_{j}& \leq cj^{-N}j^{(d-1)/2}\sum_{0\leq |k|\leq N}\left( \int_{\mathfrak{%
		a\ast }}(\langle \lambda \rangle^{b^{\prime }-N+|k|}\langle \lambda \rangle^{-|k|\theta })^{2}d\lambda
\right) ^{1/2}\notag \\
& \leq cj^{-N}j^{(d-1)/2}\left( \int_{\mathfrak{a\ast }}\langle\lambda
\rangle^{2(b^{\prime }-\theta N)}d\lambda \right) ^{1/2}\notag \\
& \leq cj^{-N}j^{(d-1)/2}.
\end{align}
Combining (\ref{I}) and (\ref{ineq}), we conclude that
\begin{equation*}
\int_{|x|\geq 1}\left\vert \kappa(x)\right\vert
\varphi _{-i\eta _{\Gamma }}(x)^{s(p)}dx=\sum_{j\geq 1}I_{j}\leq
c\sum_{j\geq 1}j^{-N}j^{(d-1)/2}<\infty ,
\end{equation*}%
since $N-\frac{d-1}{2}>1$, and the proof of Proposition \ref{propos1} is complete.

\textbf{Acknowledgment}. The author would like to thank  Professor Anestis Fotiadis for stimulating
discussions and support, as well as the anonymous referee for valuable comments and suggestions.

\end{document}